\def\.{\cdot}
\def\beq{\begin{equation}}
\def\eeq{\end{equation}}
\def\bea{\begin{eqnarray*}}
\def\eea{\end{eqnarray*}}
\def\beaa{\begin{eqnarray}}
\def\eeaa{\end{eqnarray}}
\def\ba{\begin{array}}
\def\ea{\end{array}}
\def\f{\varphi}
\def\bp{\begin{proof}}
\def\r{\end{proof}}
\def \RM{\mathbb{R}}
\def\Ric{\mathrm{Ric}}
\def\id{\mathrm{id}}
\def\be{\begin{equation}}
\def\ee{\end{equation}}
\def\tr{\mathrm{tr}}
\def\so{\mathfrak{so}}
\def\SU{\mathrm{SU}}
\def\U{\mathrm{U}}
\def\SO{\mathrm{SO}}
\def\Sp{\mathrm{Sp}}
\def\Spin{\mathrm{Spin}}
\def\scal{\mathrm{scal}}
\def\Id{\mathrm{id}}
\def\II{\mathrm{II}}
\def\vol{\mathrm{vol}}
\newtheorem{epr}{Proposition}[section]
\newtheorem{ath}[epr]{Theorem}
\newtheorem{elem}[epr]{Lemma}
\theoremstyle{definition}
\newtheorem{ere}[epr]{Remark}
\title{Extrinsic hyperspheres in manifolds with special holonomy}
\author{Tillmann Jentsch, Andrei Moroianu and Uwe Semmelmann}
\address{Tillmann Jentsch\\
Institut f\"ur Geometrie und Topologie \\
Fachbereich Mathematik\\
Universit{\"a}t Stuttgart\\
Pfaffenwaldring 57 \\
70569 Stuttgart, Germany
}
\email{tillmann.jentsch@mathematik.uni-stuttgart.de}
\address{Andrei Moroianu \\ CMLS\\ {\'E}cole Polytechnique \\ UMR 7640 du CNRS
\\ 91128 Palaiseau \\ France}
\email{am@math.polytechnique.fr}
\address{Uwe Semmelmann\\
Institut f\"ur Geometrie und Topologie \\
Fachbereich Mathematik\\
Universit{\"a}t Stuttgart\\
Pfaffenwaldring 57 \\
70569 Stuttgart, Germany
}
\email{uwe.semmelmann@mathematik.uni-stuttgart.de}
\date{\today}
\begin{document}

\begin{abstract}
We describe extrinsic hyperspheres and totally geodesic hypersurfaces  in
manifolds with special holonomy. In particular we prove the nonexistence 
of extrinsic hyperspheres in quaternion-K\"ahler manifolds. We develop a
new approach to extrinsic hyperspheres based on the classification of
special Killing forms. 
\bigskip

\noindent
2000 {\it Mathematics Subject Classification}: Primary 53C26,
53C35, 53C10, 53C15.

\medskip
\noindent{\it Keywords:} extrinsic spheres, special holonomy, hypersurfaces.
\end{abstract}

\maketitle

\section{Introduction}

A submanifold of a Riemannian manifold is called an extrinsic sphere if it is totally umbilical and has non-zero
parallel mean curvature vector field. This concept was introduced by Nomizu and Yano in~\cite{nomizu} as
a natural analogue to ordinary spheres in Euclidean spaces. Extrinsic spheres have been studied  intensively 
during the last fifty years. In several cases it was shown that extrinsic spheres have to be Euclidean spheres
and partial classifications were obtained. 

In this article we will only consider the case of extrinsic hyperspheres,
i.e. extrinsic spheres of codimension one. In general, the  existence of extrinsic 
hyperspheres seems to impose strong restrictions on the geometry of the ambient manifold, e.g. Chen and 
Nagano~\cite{chen-nagano}  showed that a locally irreducible symmetric space admitting an extrinsic hyperspheres 
has  to be of constant curvature. However there are also interesting examples of extrinsic spheres which are
not isometric to ordinary spheres. In particular Sasakian manifolds appear as extrinsic hyperspheres of
K\"ahler manifolds (cf.~\cite{yamaguchi}). Hence it is natural to  ask for  the existence of extrinsic hyperspheres 
in manifolds with {\it special} holonomy, i.e. manifolds whose restricted holonomy group 
is strictly contained in the corresponding special orthogonal group.
By the Berger-Simons holonomy theorem, we have to consider the following cases: The manifold can be locally
a Riemannian product, a locally symmetric space or its restricted holonomy group is one of 
$\U(m),\ \SU(m),\ \Sp(m)$, $\Sp(m)\cdot \Sp(1),\ {\mathrm G}_2$ or $\Spin(7)$. Our first result concerns 
quaternion-K\"ahler manifolds, i.e. Riemannian manifolds with restricted holonomy contained in
$\Sp(m)\cdot \Sp(1)$, with $m\ge 2$. We prove the following

\begin{ath}\label{qk}
A quaternion-K\"ahler manifold of non-vanishing scalar curvature does
not admit an extrinsic hypersphere. 
\end{ath}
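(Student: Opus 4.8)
The plan is to derive a contradiction by showing that an extrinsic hypersphere in a quaternion-K\"ahler manifold forces the existence of a special Killing $3$-form on the hypersurface, and then to invoke the classification of such forms. So suppose, for contradiction, that $M^{4m-1}\subset\bar M^{4m}$ is an extrinsic hypersphere with unit normal $\xi$. Totally umbilical of codimension one with non-zero parallel mean curvature means that the shape operator is $A=\l\,\Id$ for a non-zero constant $\l$, and since the normal bundle is a line bundle the Weingarten formula reduces to $\nb_X\xi=-\l X$ for every $X$ tangent to $M$. The quaternion-K\"ahler condition with $\scal\neq0$ provides the parallel Kraines $4$-form $\O$ on $\bar M$, which will be the source of the distinguished form on $M$.

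First I would define the $3$-form $\s:=(\xi\i\O)\res_M$ on $M$ and compute its covariant derivative with respect to the induced Levi-Civita connection. Using $\nb\O=0$, the Weingarten identity $\nb_X\xi=-\l X$, and the Gauss formula to convert ambient derivatives into intrinsic ones, one finds that the symmetric part of $\n^M\s$ is controlled entirely by $\l$, so that $\s$ satisfies the Killing equation $\n^M_X\s=\tfrac14\,X\i d\s$. This mirrors the flat model, where contracting a constant $(p+1)$-form with the position field of a round sphere produces precisely the Killing $p$-forms of the sphere. Differentiating once more and feeding in the curvature of the quaternion-K\"ahler structure---whose relevant component, the curvature of the parallel rank-three bundle $\mathcal Q$, is proportional to $\scal$---I would verify the special Killing equation $\n^M_X(d\s)=-4c\,X\wedge\s$ and, crucially, express the constant $c$ through $\l$ and $\scal$. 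The heart of the argument is this computation: it must show that $c\neq0$ exactly when $\scal\neq0$, so that under our hypothesis $\s$ is a genuine special Killing $3$-form.

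Finally I would invoke the classification of special Killing forms. Passing to the universal cover, or equivalently to the metric cone, on which a special Killing $3$-form corresponds to a parallel $4$-form, this classification forces $M$ to be either a round sphere or one of a short list of special geometries---essentially those whose cone has reduced holonomy admitting a parallel $4$-form. A round sphere is excluded because a neighbourhood of $M$ in $\bar M$ would then be a space form, which a quaternion-K\"ahler manifold with $\scal\neq0$ and $m\ge2$ is not. For $m\ge3$ the dimension $\dim M=4m-1\ge11$ rules out the remaining entries on the list outright. The one delicate point, which I expect to be the main obstacle, is the borderline dimension $m=2$: there $M^7$ could a priori carry a special Killing $3$-form as a nearly parallel ${\mathrm G}_2$-manifold, and this case has to be excluded by comparing the explicit $3$-form $\xi\i\O$ coming from the $\Sp(2)\Sp(1)$-structure with the defining ${\mathrm G}_2$-form, or equivalently by a holonomy obstruction on the cone. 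Ruling out this sporadic case, together with the precise determination of the constant $c$ in terms of $\scal$, constitutes the technical core of the proof.
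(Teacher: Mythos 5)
Your overall strategy is the same as the paper's: restrict $N\i\O$ to the hypersurface, show it is a special Killing $3$-form, and invoke the classification of such forms via parallel forms on the metric cone. However, two of the steps you flag as "the technical core" would not come out the way you expect, and the cases you dismiss are in fact the main ones. First, the constant in the special Killing equation does not involve the scalar curvature at all. The computation (Lemma~\ref{restrict} in the paper) uses only $\nb\O=0$ and the umbilicity relations \eqref{eh}; it gives $\n_X\c=\tfrac14 X\i d\c$ and $\n_X d\c=-4\l^2\,X\wedge\c$ with constant $\l^2>0$, non-zero for \emph{every} extrinsic hypersphere regardless of $\scal_{\bar g}$. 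So your plan to "show that $c\neq0$ exactly when $\scal\neq0$" cannot work, and the hypothesis of non-vanishing scalar curvature must enter elsewhere.

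Second, the dimension count for $m\ge3$ is wrong: Sasakian, Einstein--Sasakian and $3$-Sasakian manifolds exist in every dimension $4m-1$ (indeed $4m-1=4(m-1)+3$ is exactly the $3$-Sasakian dimension), and their cones are K\"ahler, Calabi--Yau, resp.\ hyperk\"ahler of dimension $4m$, all carrying parallel $4$-forms. These are precisely the cases one must exclude, and your proposal leaves them untouched. The paper handles them as follows: for $M$ Sasakian but not Einstein, the cone is K\"ahler with holonomy $\U(2m)$, and the parallel $4$-form on the cone (which has the same algebraic type as the Kraines form) would have stabilizer containing $\U(2m)$, which is not contained in $\Sp(m)\cdot\Sp(1)$ for $m\ge2$. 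In all remaining cases (round sphere, Einstein--Sasakian, $3$-Sasakian, nearly K\"ahler, nearly parallel ${\mathrm G}_2$) $M$ is Einstein with $\scal_g=n(n-1)\l^2$, and the trace of the Gauss equation (Lemma~\ref{factor}), $\l^2=\scal_g/n(n-1)-\scal_{\bar g}/n(n+1)$, forces $\scal_{\bar g}=0$; a quaternion-K\"ahler manifold with vanishing scalar curvature is locally hyperk\"ahler, contradicting the hypothesis. This also disposes of the round sphere without your unjustified claim that a neighbourhood of $M$ in $\bar M$ would be a space form, and of the $m=2$ nearly parallel ${\mathrm G}_2$ case without any comparison of explicit $3$-forms.
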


Similarly we show that there are no extrinsic hyperspheres in complete
Riemannian products without one-dimensional factors and in complete
manifolds with  
holonomy ${\mathrm G}_2$ or $\Spin_7$. However we give non-complete
examples, as metric cones over manifolds with special geometric
structures, such as Sasakian, nearly K\"ahler or nearly  
parallel ${\mathrm G}_2$-structures. In fact every manifold is an
extrinsic hypersphere in its (non-complete) metric cone. 

Our main observation in the proof is that a parallel form on the ambient manifold naturally defines a so-called
special Killing form on any extrinsic hypersphere. Then we use the classification of special Killing forms
(cf.~\cite{uwe}) and in particular the fact that these forms define parallel forms on the metric cone.
This gives a unified approach to the investigation of extrinsic hyperspheres, which also reproves 
some of the known results in the local product and K\"ahler case.

At some points we also use a remarkable theorem of Koiso (cf.~\cite{koiso} or Theorem~\ref{koiso}). In particular 
this theorem states that a complete Einstein manifold of non-constant sectional curvature does not admit any 
extrinsic hypersphere which is itself Einstein and has positive scalar curvature. 
As a striking consequence we note that, contrary to the general expectation, it is not possible to construct 
new examples of 6-dimensional 
nearly K\"ahler manifolds as totally umbilical hypersurfaces of complete nearly parallel ${\mathrm G}_2$-manifolds.

Finally we consider totally geodesic  hypersurfaces in manifolds with special holonomy. 
We first show that the problem of finding totally geodesic hypersurfaces in a locally reducible 
manifold reduces to the same problem for one  of the locally defined factors, see Theorem~\ref{ath:pr}.
Our main result in the irreducible case is then the following

\begin{ath}\label{ath:tg}
There do not exist
any totally geodesic hypersurfaces in 
\begin{enumerate}
\item  locally irreducible K\"ahler-Einstein manifolds (including  Calabi-Yau and  hyperk\"ahler manifolds);
\item  Quaternion-K\"ahler manifolds  
\item  manifolds with holonomy  ${\mathrm G}_2$ or $\Spin(7)$.
\item  locally irreducible symmetric spaces of non-constant sectional
  curvature. 
\end{enumerate}
\end{ath}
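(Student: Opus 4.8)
The common starting point is the Codazzi equation. If $N\subset M$ is a totally geodesic hypersurface with unit normal $\xi$, then $\II\equiv 0$, so Codazzi gives $(R(X,Y)Z)^{\perp}=0$ for all $X,Y,Z\in TN$; equivalently, using the symmetries of the curvature tensor, $R(X,Y)\xi=0$ for all $X,Y\in TN$, and the Gauss equation reduces to $R^{N}=R^{M}|_{TN}$. Note that this condition is automatically satisfied in spaces of constant curvature (where great hyperspheres are totally geodesic), so the whole point is to show that it is incompatible with the much richer curvature of a non-constant-curvature manifold carrying one of the special holonomies.

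Next I would use the parallel structures to enlarge the kernel of the tangential curvature operators. Since a parallel endomorphism $\Phi$ commutes with every curvature operator, $[R^{M}(X,Y),\Phi]=0$, the identity $R^{M}(X,Y)\xi=0$ propagates to $R^{M}(X,Y)(\Phi\xi)=0$. In the K\"ahler case ($\Phi=J$) this kills $J\xi$ as well; in the $\mathrm{G}_2$ and $\Spin(7)$ cases the parallel form induces such endomorphisms, and moreover $\xi\lrcorner\varphi$ and $\iota^{*}\varphi$ restrict to parallel forms on $N$, equipping $N$ with a parallel $\SU(3)$- (resp.\ $\mathrm{G}_2$-) structure; in particular $N$ is Ricci-flat. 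Feeding $\Ric^{N}=0$ and $\Ric^{M}=0$ into the Gauss identity $\Ric^{M}(Y,Z)=\Ric^{N}(Y,Z)+R^{M}(\xi,Y,Z,\xi)$ forces the Jacobi term to vanish, and together with the skew-symmetries this yields $R^{M}(\xi,Z)=0$ for all $Z\in TN$. Hence $\xi$ lies in the kernel of every curvature operator $R^{M}(V,W)$ at each point of $N$.

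To finish the Ricci-flat cases (item (1) for Calabi-Yau and hyperk\"ahler, and item (3)) I would propagate this pointwise vanishing to the infinitesimal holonomy. Since $\xi$ is parallel along $N$ in tangential directions, differentiating $R^{M}(V,W)\xi=0$ along $N$ gives $(\n_{Z}R^{M})(V,W)\xi=0$ for $Z\in TN$, and the second Bianchi identity supplies the missing normal derivative; iterating, every element of the infinitesimal holonomy algebra annihilates $\xi$. Thus the local holonomy fixes $\xi$ and $M$ is locally reducible, contradicting the irreducibility of the $\SU(m)$, $\Sp(m)$, $\mathrm{G}_2$ and $\Spin(7)$ representations. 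For the remaining, non-Ricci-flat cases the relevant nonzero curvature component is the scalar curvature itself: for K\"ahler-Einstein with $\lambda\neq0$ the Ricci-form identity combined with $R^{M}(X,Y)\xi=R^{M}(X,Y)J\xi=0$ forces $\lambda=0$, a contradiction; for quaternion-K\"ahler manifolds one evaluates the $\mathbb{HP}^{m}$-part of the curvature on the tangential frame $I\xi,J\xi,K\xi$ to see that $R^{M}(X,Y)\xi$ cannot vanish when $\scal\neq0$ (this is exactly the computation underlying Theorem~\ref{qk}); and for an irreducible symmetric space of non-constant curvature the condition $R(X,Y)\xi=0$ makes $TN$ a codimension-one Lie triple system, which the irreducibility of the isotropy representation excludes unless the curvature is constant. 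The locally reducible ambient case is handled separately by Theorem~\ref{ath:pr}.

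The main obstacle is the passage from the pointwise, purely tangential curvature identities to a statement about the ambient holonomy. The algebraic input $R^{M}(X,Y)\xi=0$ only constrains $R^{M}$ on $\Lambda^{2}TN$, and the operators $R^{M}(\xi,\cdot)$ are extra generators of the holonomy that need not preserve $\xi$; it is precisely here that one must use the geometry, either to kill these operators outright (via the Gauss--Ricci identity in the Ricci-flat cases) or to exhibit a single surviving nonzero curvature term proportional to $\scal$ (in the Einstein cases). Tracking this in the quaternion-K\"ahler case is the most delicate point, because the structures $I,J,K$ are only locally defined and not parallel, so $[R^{M}(X,Y),I]$ picks up the curvature of the auxiliary $\sp(1)$-bundle --- which is itself a nonzero multiple of $\scal$ times the fundamental forms, and thus the very quantity that produces the contradiction. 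Where the induced structure makes $N$ Einstein with positive scalar curvature, Koiso's Theorem~\ref{koiso} gives an alternative, cleaner route to the same conclusion.
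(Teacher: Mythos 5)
Your approach is genuinely different from the paper's, which does not argue via curvature identities at all: the paper combines Lemma~\ref{factor} (a totally geodesic hypersurface of an Einstein manifold has constant scalar curvature) with Koiso's result that such a hypersurface is \emph{locally reflective}, and then proves (Theorem~\ref{ath:hol}) via the symmetric pair $(G,\tilde H)$ built from the reflection and the totally geodesic orbit map $[g]\mapsto g(N_p)$ into $\rmS^n$ that a locally irreducible manifold carrying a reflective hypersurface has full restricted holonomy $\SO(n+1)$. That single argument disposes of all four cases at once. Your case-by-case curvature route contains genuine gaps. The most serious is the K\"ahler--Einstein case with $\lambda\neq 0$: the pointwise consequences of Codazzi, namely $\bar R(X,Y)N=0$ for $X,Y$ tangent, together with the K\"ahler symmetries, do \emph{not} force $\lambda=0$. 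Working it out, one finds that $N\wedge T_pM$ and $\Lambda^2 T_pM$ are invariant under the curvature operator, that $\bar R(N\wedge Z)=0$ for $Z\perp JN$, and that $\bar R(N\wedge JN)=\lambda\, N\wedge JN$ with $\Ric(N,N)=\bar R(JN,N,N,JN)=\lambda$; this is a perfectly consistent algebraic configuration with $\lambda\neq 0$, so no ``Ricci-form identity'' yields a contradiction at a point. Some non-pointwise input (the reflective structure, or analyticity plus derivatives of curvature) is unavoidable here.

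Two further steps do not close as written. First, in the Ricci-flat cases, passing from ``$\bar R(V,W)N=0$ at all points of $M$'' to ``the restricted holonomy of $\bar M$ fixes $N$'' is not achieved by one application of the second Bianchi identity: tangential derivatives of the identity are fine (since $N$ is $\bar\nabla$-parallel along $M$), and Bianchi recovers $(\nabla_N\bar R)(V,W)N$ for $V,W$ tangent, but it gives nothing for the purely normal components such as $(\nabla_N\bar R)(V,N)N$, and the iteration to all orders (needed for the infinitesimal holonomy, which moreover only equals the local holonomy for analytic metrics) is not carried out. The Ambrose--Singer generators involve curvature at points off $M$, which your pointwise identities do not reach. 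Second, the quaternion-K\"ahler and symmetric cases are asserted rather than proved: the ``computation underlying Theorem~\ref{qk}'' does not exist in the form you need, since the paper's proof of Theorem~\ref{qk} concerns extrinsic hyperspheres with $\lambda\neq 0$ and proceeds through special Killing forms and the metric cone, not through the curvature of a totally geodesic hypersurface; and the claim that an irreducible symmetric space of non-constant curvature admits no codimension-one Lie triple system is precisely the nontrivial content that the paper's Theorem~\ref{ath:hol} supplies (its proof there is exactly the orbit argument showing $GN_p=\rmS^n$, hence $\frakg=\so(T_p\bar M)$). As a blueprint your proposal is plausible for the Ricci-flat holonomies, but for items (1) with $\lambda\neq0$, (2) and (4) it is missing the key mechanism, which in the paper is the reflectivity of the hypersurface.
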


In particular, Theorems \ref{qk}  and \ref{ath:tg} imply that a complete quaternion-K\"ahler manifold does not admit any (possibly non-complete) 
totally umbilical hypersurface.

\section{Preliminaries}
Let  $(\bar M, \bar g)$ be an  $(n+1)$-dimensional Riemannian manifold
and let $i: M \subset \bar M$ be a
submanifold with induced Riemannian metric $g$. The second fundamental form is defined as
$
\II(X,Y) = \bar \nabla_XY - \nabla_XY
$
where $X$ and $Y$ are vector fields tangent to $M$ and $\nabla$ resp. $ \bar\nabla$ denote the
Levi-Civita connections of $g$ resp. $ \bar g$.  Let $N$ be a normal vector field on $M$ then the shape operator 
$A_N X := (\bar \nabla_XN)^T$ is related to the second fundamental form via
$$
\bar g (\II(X, Y), N) = \bar g (A_N X, Y) \ ,
$$
for any vector fields $X,Y$ on $M$. A submanifold $ M \subset \bar M$  is said to be {\it totally umbilical} if
$\II(X,Y) = g(X,Y)H$, with $H=\frac1n \tr \, \II $ denoting the {\it mean curvature} vector field of $M$ in $\bar M$.
Choosing a  parallel unit length normal vector field $N$, this condition can be written as 
$\II (X,Y) = \lambda \, g(X,Y) \, N$ for some function $\lambda$ on $M$.
The manifold $M$ is called {\it totally geodesic } in $\bar M$ if the equation $\II = 0$ holds, corresponding to the special
case $\lambda = 0$.

In this article we are especially interested in {\it extrinsic hyperspheres},  i.e. complete hypersurfaces such that
$\II (X,Y) = \lambda g(X,Y) N$ for some real constant $\lambda\ne 0$. 

Let  $ M \subset \bar M$ be a totally umbilical hypersurface, with unit length normal vector field $N$,
then  the covariant derivative $\bar \nabla$ may  be written as
\beq\label{eh}
\bar \nabla_XY = \nabla_XY + \lambda g(X,Y)N,\qquad \bar\nabla_XN = -\lambda X \ ,
\eeq
where $X,Y$ denote vector fields tangent to $M$. For totally umbilical hypersurfaces the 
curvature equations of Gau\ss{} and Codazzi take the following form:

$$
\begin{array}{rl}
\bar R(X,Y,Z,W) & = \quad R(X,Y,Z,W) \;+\;  \lambda^2 g(X \wedge Y, Z \wedge W) \\[2ex]
\bar R (X,Y,Z,N) & = \quad X(\lambda) \,g(Y, Z) \;-\; Y(\lambda)\,g(X,Z)
\;=\; (d\lambda \wedge Z) (X, Y)
\end{array} 
$$

\noindent
where $X,Y,Z,W$ are vector fields on $M$, and $\bar R $ resp. $R$ denote the Riemannian curvature tensors
of $\bar g$ resp. $ g$, and $Z$ is identified with its dual 1-form
using the metric $g$. Let the curvature operator $R$ on 2-vectors be defined by 
$$
g(R(X\wedge Y), Z \wedge W) = - R(X,Y,Z,W) \ ,
$$
so that the curvature operator of the standard sphere is the identity. Then the Gau\ss{} equation may also be written as
$
R = \bar R + \lambda^2 \,\id \  .
$
Hence the sectional curvatures $\bar K$ resp.  $K$ of $\bar g$ resp. $ g$ are related by
$K = \bar K + \lambda^2$.

The following well-known lemma will be helpful below (cf. \cite{koiso}). 

\begin{elem}\label{factor}
Let $(M^n, g), n \ge 2,$ be a  totally umbilical hypersurface of an Einstein manifold $(\bar M, \bar g)$. Then
$\II = \lambda g N$, for some constant $\lambda$ and a parallel unit length normal vector field $N$,
i.e. complete totally umbilical  hypersurfaces in Einstein manifolds are extrinsic spheres. Moreover,
$$
\lambda^2 \;=\;  \frac{\scal_g}{n(n-1)}  \; - \;   \frac{\scal_{\bar g}}{n(n+1)} \ 
$$
and  $\scal_g$ is constant. In particular the inequality $(n+1) \, \scal_g \ge (n-1) \, \scal_{\bar g}$ holds, with equality 
in the case of a totally geodesic hypersurface, i.e. for $\lambda = 0$.
\end{elem}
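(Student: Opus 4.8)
The plan is to read constancy of $\lambda$ out of the Codazzi equation combined with the Einstein condition, and then to obtain the scalar-curvature formula by comparing the two metrics plane by plane through the Gau\ss{} equation. First I would prove that $\lambda$ is constant. Fix $p \in M$ and an orthonormal frame $e_1,\dots,e_n$ of $T_pM$, so that $e_1,\dots,e_n,N$ is an orthonormal frame of $T_p\bar M$. Tracing the Codazzi equation $\bar R(X,Y,Z,N) = X(\lambda)g(Y,Z) - Y(\lambda)g(X,Z)$ over $X=Z=e_i$ and summing gives
\[
\sum_i \bar R(e_i,Y,e_i,N) \;=\; \sum_i e_i(\lambda)\,g(Y,e_i) \;-\; Y(\lambda)\sum_i g(e_i,e_i) \;=\; Y(\lambda) - n\,Y(\lambda) \;=\; (1-n)\,Y(\lambda),
\]
where I used $Y=\sum_i g(Y,e_i)e_i$ to evaluate the first sum as $d\lambda(Y)=Y(\lambda)$. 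Since $\bar R(N,Y,N,N)=0$, the left-hand side is exactly the mixed Ricci component $\overline{\Ric}(Y,N)$, which vanishes because $\bar M$ is Einstein and $Y\perp N$. As $n\ge 2$, this forces $Y(\lambda)=0$ for every tangent $Y$, so $\lambda$ is constant on the connected manifold $M$. The normal field $N$ is automatically parallel in the normal connection: from $\bar g(\bar\nabla_XN,N)=\tfrac12 X(\bar g(N,N))=0$ the derivative $\bar\nabla_XN$ is tangential, hence $\nabla^\perp_X N=(\bar\nabla_XN)^\perp=0$.

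Next I would compare scalar curvatures. Writing $\scal_{\bar g}$ as the sum of the sectional curvatures $\bar K$ over all coordinate planes of the frame $e_1,\dots,e_n,N$, I split the planes into those spanned by two tangent vectors and those containing $N$:
\[
\scal_{\bar g} \;=\; \sum_{i\ne j}\bar K(e_i,e_j) \;+\; 2\sum_i \bar K(e_i,N).
\]
For the purely tangent planes the Gau\ss{} relation $K=\bar K+\lambda^2$ gives $\sum_{i\ne j}\bar K(e_i,e_j)=\scal_g-n(n-1)\lambda^2$, since there are $n(n-1)$ ordered pairs. For the mixed planes I recognize $\sum_i \bar K(e_i,N)=\overline{\Ric}(N,N)=\tfrac{\scal_{\bar g}}{n+1}$ by the Einstein condition in dimension $n+1$. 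Substituting and solving the resulting linear equation for $\lambda^2$ yields
\[
\lambda^2 \;=\; \frac{\scal_g}{n(n-1)} \;-\; \frac{\scal_{\bar g}}{n(n+1)}.
\]
Since $\bar M$ is Einstein of dimension $n+1\ge 3$, its scalar curvature $\scal_{\bar g}$ is constant by Schur's lemma, and $\lambda$ has just been shown to be constant; rearranging the displayed identity expresses $\scal_g$ as a combination of these two constants, so $\scal_g$ is constant as well. Finally, the inequality $(n+1)\scal_g\ge(n-1)\scal_{\bar g}$ is precisely the nonnegativity $\lambda^2\ge 0$ read off the same formula, with equality exactly when $\lambda=0$, i.e. in the totally geodesic case.

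I expect the main obstacle to be purely a matter of careful bookkeeping in the scalar-curvature splitting: correctly isolating the $n$ mixed planes through $N$, keeping the factor $2$ and the count $n(n-1)$ straight, and identifying the mixed contribution with $\overline{\Ric}(N,N)$ in whatever sign convention is fixed by $g(R(X\wedge Y),Z\wedge W)=-R(X,Y,Z,W)$. Conceptually there is no deep difficulty; the heart of the argument is the single trace of the Codazzi equation, which pins down $\lambda$ and makes everything else a short computation.
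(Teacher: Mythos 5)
Your proof is correct and follows essentially the same route as the paper: a trace of the Codazzi equation against the Einstein condition to force $d\lambda=0$, and a double trace of the Gau\ss{} equation to extract the formula for $\lambda^2$ (you organize the latter as a sum of sectional curvatures over coordinate planes, the paper as $\overline{\Ric}(X,X)$ for tangent $X$ followed by a trace over $TM$, but these are the same computation). The only cosmetic difference is the order of the two steps, and your explicit appeal to Schur's lemma for the constancy of $\scal_{\bar g}$ makes the final constancy claim slightly more self-contained than the paper's.
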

\proof
Let $M \subset \bar M$ be a totally umbilical hypersurface in $\bar M$. By definition we have
$\II = \lambda g N$ for some function $\lambda$ on $M$.
Let $\{e_i\}, i=1, \ldots, n+1$, with $e_{n+1}:=N$, be a  local orthonormal frame for $T\bar M$ restricted to $M$.
Then  the Ricci curvature $\overline \Ric$ of $\bar g$ applied to a vector field $X$ tangent to $M$ can 
be computed using the  Gau\ss{} equation:

$$
\begin{array}{rl}
\overline\Ric(X,X) & = \sum^n_{i=1} \bar R(X,e_i, e_i, X)   \;+\; \bar R(X, N, N, X) \\[1.5ex]
& = \sum^n_{i=1} (R(X,e_i,e_i, X) \;+\; \lambda^2 [g(X, e_i)^2 - g(X,X) g(e_i, e_i)] ) \;+\; \bar R(X, N, N, X) \\[1.5ex]
& = \Ric(X,X) \;-\; \lambda^2 (n-1) |X|^2 \;+\; \bar R(X, N, N, X)
\end{array}
$$

\noindent
In this equation we take the trace over an orthonormal base in $TM$ and use the assumption that $(\bar M, \bar g)$
is Einstein to obtain
$$
n \, \frac{   \scal_{\bar g} }{n+1} \;\;=\;\; \scal_g \;-\;
n(n-1)\lambda^2 \;+\; \frac{\scal_{\bar g}}{n+1}  
$$

\noindent
This proves the equation for $\lambda^2$, the inequality and the
characterization of the case of 
equality. It remains to show that $\lambda$, and thus also $\scal_g$,
is constant. This immediately 
follows from the Codazzi equation. Indeed if we take the trace over a
local orthonormal frame on $M$ 
we obtain for any vector field $X$ on $M$
$$
\overline{\Ric}\,(X, N) \;=\; (n-1)\, d\lambda(X) \.
$$
Hence, $d\lambda = 0$ and we conclude that  $\lambda$ as well as
$\scal_g$ have to be constant on $M$.

\qed

\begin{ere} 
Using a result of~\cite{JR} on the existence of submanifolds with parallel 
second fundamental form (e.g. totally umbilical submanifolds with 
$\lambda=const$), one can show that in a complete manifold
with real analytic metric every (possibly non-complete) submanifold with parallel 
second fundamental is contained in a complete one. Further, we recall
that every Einstein metric is real analytic with respect to normal coordinates according to 
a theorem of DeTurck and Kazdan, see~\cite{koiso}.  
It follows that in a complete Einstein manifold every totally
umbilical submanifold is an open part of an extrinsic sphere. 
\end{ere}

In general we will not assume that the ambient manifold $\bar M$ has
to be complete. However if we assume 
completeness, as well as the Einstein condition for $g$ and $\bar g$,
the following theorem of Koiso 
(cf. \cite{koiso}) gives a rather strong restriction for extrinsic
hyperspheres.

\begin{ath}[Koiso]\label{koiso}
Let $(M, g)$ be a totally umbilical Einstein hypersurface in a
complete Einstein manifold $(\bar M, \bar g)$. 
Then the only possible cases are:

(a)  $g$ has positive Ricci curvature. Then $g$ and $\bar g$ have
constant sectional curvature 

(b) $\bar g$ has negative Ricci curvature. If $\bar M$ is compact or
$(\bar M, \bar g)$ homogeneous, then $g$ and $\bar g$ 
      have constant sectional curvature
      
(c) $g$ and $\bar g$ have zero Ricci curvature. If $(\bar M, \bar g )$
is simply connected, then $(\bar M, \bar g )$ 
decomposes as $(\tilde M, \tilde g) \times \RM $, where $(\tilde M,
\tilde g)$ is a totally geodesic hypersurface 
in $( \bar M, \bar g) $ which contains $M$.
\end{ath}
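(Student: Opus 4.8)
The plan is to extract from the hypotheses a pointwise rigidity of the ambient curvature along $M$, propagate it to a neighbourhood of $M$ using the Einstein condition, and finally read off the three cases from a rigidity theorem for concircular scalar fields. I would begin by recording what Lemma~\ref{factor} already supplies: since $(\bar M,\bar g)$ is Einstein and $M$ is a complete totally umbilical hypersurface, one has $\II=\lambda g N$ with $\lambda$ \emph{constant}, the Gau\ss{} identity $R=\bar R+\lambda^2\,\id$, the Codazzi identity $\bar R(X,Y,Z,N)=0$, and $\lambda^2=\scal_g/(n(n-1))-\scal_{\bar g}/(n(n+1))\ge0$. Repeating the trace computation in the proof of Lemma~\ref{factor}, but this time keeping the term $\bar R(X,N,N,X)$ and using in addition that $(M,g)$ is Einstein, I obtain that the normal Jacobi operator is scalar along $M$, namely $\bar R(X,N)N=\kappa\,X$ for all $X\in TM$, where
\[
\kappa \;=\; \frac{\scal_{\bar g}}{n(n+1)} .
\]
Thus all sectional curvatures $\bar K(X,N)$ through the normal equal $\kappa$, and $\kappa$ shares the sign of $\scal_{\bar g}$.

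This last identity, together with $\lambda^2=\scal_g/(n(n-1))-\kappa\ge0$, already forces the stated trichotomy by a purely numerical argument. If $\scal_g>0$ then $g$ has positive Ricci curvature and we are in case (a). Otherwise $\scal_g\le0$, so $\kappa\le\scal_g/(n(n-1))\le0$ and hence $\scal_{\bar g}\le0$; the subcase $\scal_{\bar g}<0$ is case (b), while $\scal_{\bar g}=0$ forces $\kappa=0$, whence $\scal_g=0$ and $\lambda=0$, i.e. case (c). So the three alternatives are genuinely exhaustive, and the remaining work is to prove the rigidity conclusions in each.

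The heart of the proof is to promote the identities $\bar R(X,N)N=\kappa X$ and $\bar R(X,Y,Z,N)=0$, valid only \emph{on} $M$, to a warped-product description of $\bar g$ near $M$. I would introduce the signed distance $\rho$ to $M$ and the family $M_t=\rho^{-1}(t)$ of parallel hypersurfaces, whose shape operators $S(t)$ satisfy the Riccati equation $S'+S^2+R_{\partial_t}=0$ along the unit normal geodesics, with $S(0)=\lambda\,\id$ and $R_{\partial_0}=\kappa\,\id$. The goal is to show that the Einstein condition, combined with the Einstein property of the slices, keeps the $M_t$ totally umbilical, so that $\bar g=dt^2+f(t)^2 g_M$ with $f''+\kappa f=0$, $f(0)=1$, $f'(0)=\lambda$; equivalently, this yields a nonconstant concircular field $u=f'\circ\rho$ with $\bar\nabla^2 u=-\kappa\,u\,\bar g$. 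I expect this propagation step to be the main obstacle: the trace-free part of $R_{\partial_t}$ is not controlled by the Einstein condition alone, so preserving umbilicity of the slices requires exploiting the second Bianchi identity together with the rigidity of the Einstein slices, and this is the genuinely technical core of Koiso's argument.

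Granting the concircular field, the three conclusions follow from the Obata--Tashiro classification of complete manifolds carrying such a field. For $\kappa>0$ the function $u$ satisfies Obata's equation $\bar\nabla^2 u=-\kappa\,u\,\bar g$ with $\kappa>0$, so completeness forces $(\bar M,\bar g)$ to be a round sphere, hence of constant curvature; then $K=\bar K+\lambda^2$ shows $(M,g)$ has constant curvature too, which is case (a). For $\kappa<0$ the warping is of $\sinh/\cosh/\exp$ type and is \emph{not} rigid in general, the non--space-form warped products being either incomplete or inhomogeneous; hence only the additional hypothesis of compactness or homogeneity pins down the hyperbolic space form, giving case (b). Finally for $\kappa=0$ the trichotomy gives $\lambda=0$, so $\bar\nabla_X N=-\lambda X=0$ and the normal extends to a \emph{parallel} unit vector field $\partial_t=\bar\nabla\rho$; by the de Rham decomposition theorem a simply connected $\bar M$ splits as $(\tilde M,\tilde g)\times\RM$, with $\tilde M=\rho^{-1}(0)$ the totally geodesic leaf containing $M$, which is case (c).
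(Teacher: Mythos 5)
The paper offers no proof of this statement --- it is imported verbatim from Koiso's article \cite{koiso} --- so there is no internal argument to compare against; I can only assess your sketch on its merits. Your preliminary steps are correct: combining the trace computation of Lemma~\ref{factor} with the Einstein condition on $g$ and polarizing does give $\bar R(X,N,N,Y)=\kappa\, g(X,Y)$ with $\kappa=\scal_{\bar g}/(n(n+1))$, and your numerical argument that the three alternatives are exhaustive is sound. But the proposal has a genuine gap exactly where you flag one: everything after ``granting the concircular field'' is conditional on promoting the pointwise identities along $M$ to a warped-product structure $\bar g=dt^2+f(t)^2g$ on a neighbourhood (and, for Obata/Tashiro, on all) of $\bar M$, and you give no argument for this beyond the remark that it should follow from the second Bianchi identity. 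As you yourself observe, the Riccati equation $S'+S^2+R_{\partial_t}=0$ does not close up on the umbilical ansatz because the trace-free part of $R_{\partial_t}$ is not controlled off $M$; showing that the parallel hypersurfaces stay umbilical is the entire content of Koiso's theorem (in \cite{koiso} it rests on the reflectivity of $M$ and a uniqueness statement for Einstein metrics in geodesic gauge, the mechanism behind Theorem~\ref{koiso2}). Even case (c) is not free: $\lambda=0$ makes $N$ parallel only in directions tangent to $M$, and extending it to a parallel field on an open set --- which is what de Rham requires --- again needs the slices $M_t$ to remain totally geodesic.

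There is a second, independent gap in the endgame. Your case analysis there is keyed to the sign of $\kappa$, i.e.\ of $\scal_{\bar g}$, whereas case (a) of the theorem is keyed to the sign of $\Ric_g$, and these can disagree: a geodesic sphere in hyperbolic space is a totally umbilical Einstein hypersurface with $\Ric_g>0$ sitting inside an Einstein manifold with $\kappa<0$. Such a configuration belongs to case (a) of the statement, but in your scheme it lands in the ``$\kappa<0$, not rigid in general'' bucket, so your argument does not establish case (a) in the regime $\scal_g>0$, $\scal_{\bar g}\le 0$. Closing this requires the extra input that $\Ric_g>0$ forces $M$ compact (Myers) and that the warping function, satisfying $f''+\kappa f=0$ with $(f')^2+\kappa f^2=\scal_g/(n(n-1))>0$, must vanish somewhere, so completeness produces a smooth cone point, which forces $(M,g)$ to be a round sphere and $\bar g$ to have constant curvature; the bare Obata equation with $\kappa>0$, as invoked, does not cover it.
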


\section{Extrinsic hyperspheres}
In this section we will study totally umbilical submanifolds in ambient spaces with special holonomy.

\subsection{Special Killing forms}
Let  $i: M \subset \bar M$ be an $n$-dimensional extrinsic hypersphere
in a  manifold $(\bar M, \bar g)$ 
with special holonomy. Except for the case of symmetric spaces, the
restriction of holonomy is directly 
linked to the existence of certain parallel differential forms $\sigma
\in \Omega^k(\bar M)$. The main tool 
in our investigation of extrinsic hyperspheres is the observation that
the pull-back forms  
$i^*(N \lrcorner \,  \sigma)$  and $i^*\sigma$ are special Killing
resp. $\ast$-Killing forms on $M$ 
(cf. \cite{uwe}). Here and henceforth $N$ denotes a unit normal vector
field along $M$. 

\begin{elem}\label{restrict}
Let  $i: M \subset \bar M$ be an $n$-dimensional extrinsic hypersphere and  
$\sigma$ be a non-trivial parallel $k$-form on $\bar M$ and let $\gamma := i^*(N \lrcorner \,  \sigma)$  and 
$\beta := i^*\sigma$ be the pull-back forms on $M$. Then for every vector field $X$ on $M$ the following
equations hold:
$$
\begin{array}{lrl}
(i) &
\nabla_X \gamma  & =  \qquad   \frac{1}{k}\, X \lrcorner \, d \gamma    \\[1.5ex]
(ii) & 
\nabla_X d \gamma  & =  \quad -\,  k  \lambda^2 \,X \wedge \gamma  \\[1.5ex]
(iii) &
 \nabla_X \beta & =  \quad  -\, \frac{1}{n-k+1}\, X \wedge d^* \beta  \\[1.5ex]
(iv) &
 \nabla_X d^* \beta  & =  \quad  (n-k+1 ) \lambda^2 \; X \lrcorner \,  \beta,
\end{array}
$$
where the non-zero constant $\lambda$ is given by \eqref{eh}.
In particular, it follows that $\gamma$ is coclosed and $\beta$ is closed. Moreover, the
forms $\gamma$ and $\beta$ are related by
$$
 d \gamma   =   -\, k\,\lambda \,  \beta, \qquad d^*\beta = - \, (n-k+1)\lambda \, \gamma \ .
$$
Furthermore, $\gamma$ is a non-parallel $k-1$-form on
$M$.

\end{elem}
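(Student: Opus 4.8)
The plan is to reduce everything to the two first-order identities
$$\nabla_X\beta = \lambda\, X\wedge\gamma, \qquad \nabla_X\gamma = -\lambda\, X\lrcorner\beta,$$
and then obtain all the remaining assertions by purely algebraic manipulations with an orthonormal frame. To prove these identities I would systematically compare $\nabla$ on $M$ with $\bar\nabla$ on $\bar M$ via the umbilicity relations \eqref{eh}. For tangent fields $X,Y_1,\dots,Y_p$ and any form $\alpha$ on $\bar M$, the only difference between $(\nabla_X i^*\alpha)(Y_1,\dots,Y_p)$ and $(\bar\nabla_X\alpha)(Y_1,\dots,Y_p)$ comes from the correction term $\bar\nabla_X Y_j-\nabla_X Y_j=\lambda g(X,Y_j)N$. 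Applying this to $\alpha=\sigma$ and using $\bar\nabla\sigma=0$ produces $\nabla_X\beta=\lambda\sum_j g(X,Y_j)\,\sigma(Y_1,\dots,N,\dots,Y_k)$ on the nose, and the last expression is recognized as $(X\wedge(N\lrcorner\sigma))(Y_1,\dots,Y_k)$ after moving the inserted $N$ into the $j$-th slot; this gives $\nabla_X\beta=\lambda\,X\wedge\gamma$, identifying $X$ with its dual $1$-form.

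For $\gamma=i^*(N\lrcorner\sigma)$ I would instead differentiate the function $\sigma(N,Y_1,\dots,Y_{k-1})$ directly along $X$ by the Leibniz rule for $\bar\nabla$, using $\bar\nabla\sigma=0$ together with the second relation $\bar\nabla_XN=-\lambda X$ of \eqref{eh}. The $\bar\nabla_XN$ term contributes $-\lambda\,\sigma(X,Y_1,\dots,Y_{k-1})=-\lambda\,(X\lrcorner\beta)(Y_1,\dots,Y_{k-1})$, while the $\bar\nabla_XY_j$ terms split into an intrinsic piece that cancels exactly against $\sum_j\sigma(N,\dots,\nabla_XY_j,\dots)$ and a normal piece $\lambda g(X,Y_j)\sigma(N,\dots,N,\dots)$ which vanishes since $N$ then occupies two slots. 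This single cancellation is the one step requiring care, and it yields the clean identity $\nabla_X\gamma=-\lambda\,X\lrcorner\beta$. I expect the sign-tracking here to be the main source of friction.

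Granting the two identities, the rest is bookkeeping. From $d\gamma=\sum_i e_i\wedge\nabla_{e_i}\gamma$ and $d^*\beta=-\sum_i e_i\lrcorner\nabla_{e_i}\beta$, substituting the identities and invoking the standard frame relations $\sum_i e_i\wedge(e_i\lrcorner\beta)=k\,\beta$ and $\sum_i e_i\lrcorner(e_i\wedge\gamma)=(n-k+1)\,\gamma$, I obtain $d\gamma=-k\lambda\,\beta$ and $d^*\beta=-(n-k+1)\lambda\,\gamma$. Equations $(i)$ and $(iii)$ are then immediate rewrites of the two first-order identities through these expressions, and $(ii)$, $(iv)$ follow by applying $\nabla_X$ to $d\gamma=-k\lambda\beta$ and $d^*\beta=-(n-k+1)\lambda\gamma$ and feeding the identities back in once more. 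Coclosedness of $\gamma$ and closedness of $\beta$ drop out of $\sum_i e_i\lrcorner(e_i\lrcorner\beta)=0$ and $\sum_i e_i\wedge(e_i\wedge\gamma)=0$.

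Finally, to see that $\gamma$ is non-parallel I argue by contradiction. If $\nabla\gamma=0$, then $X\lrcorner\beta=0$ for all $X$, hence $\beta=0$; plugging this into $\nabla_X\beta=\lambda\,X\wedge\gamma$ gives $X\wedge\gamma=0$ for all $X$, which forces $\gamma=0$ because $\gamma$ has degree $k-1<n$ (the parallel forms attached to special holonomy are never the volume form). But $\beta=0$ and $\gamma=0$ together mean that $\sigma$ annihilates every $k$-tuple from $T\bar M|_M=TM\oplus\RM N$, so $\sigma$ vanishes at the points of $M$; being parallel on a connected manifold, it would then vanish identically, contradicting the non-triviality of $\sigma$. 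Here is exactly where the hypotheses $\lambda\neq 0$ and $k\le n$ enter.
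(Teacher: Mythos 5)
Your proposal is correct and follows essentially the same route as the paper: both derive the two first-order identities $\nabla_X\beta=\lambda\,X\wedge\gamma$ and $\nabla_X\gamma=-\lambda\,X\lrcorner\beta$ by comparing $\nabla$ with $\bar\nabla$ via \eqref{eh}, deduce $d\gamma=-k\lambda\beta$ and $d^*\beta=-(n-k+1)\lambda\gamma$ by frame contraction, and obtain (i)--(iv) and the non-parallelism of $\gamma$ by the same bookkeeping and contradiction argument.
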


\proof
Let  $i: M \rightarrow \bar M$ the inclusion map of the extrinsic hypersphere $M$, then
the differential $i_*$ identifies $T_pM$ with a subspace of $T_p\bar M$. We have
$i_* (\nabla_XY) = \bar \nabla_XY - \lambda g(X,Y) N$, where $X,Y$ are vector fields
tangent to $M$. Let $X, X_1,\ldots, X_k$ be vector fields on $M$ then
$$
\begin{array}{rl}
(\nabla_X i^*\sigma)(X_1, \ldots, X_k)
& =
X(\sigma(X_1,\ldots, X_k)) - \sum_j \sigma ( \ldots, i_*(\nabla_XX_j), \ldots) \\[1.5ex]
& =
(\bar \nabla_X \sigma)(X_1, \ldots, X_k) \; + \; 
\lambda \, \sum_j g(X, X_j)\,\sigma(\ldots, N, \ldots) \\[1.5ex]
& =
\lambda \, (X \wedge i^*[N \lrcorner \, \sigma])(X_1, \ldots, X_k)
\end{array}
$$
It immediately follows that $i^*\sigma $ is closed. Moreover, contracting  with $X=e_k$ and 
summing over a local orthonormal base $\{e_k\}$ of $TM$ yields  
$
d^*(i^*\sigma ) = - (n-k+1)\,\lambda\, i^*[N \lrcorner \, \sigma] \ . 
$
Substituting this into the equation for $\nabla_X i^*\sigma$ proves (iii). Similarly we find
$$
\nabla_X i^*(N \lrcorner \, \sigma) \;=\; \bar \nabla_X(N \lrcorner \, \sigma)
\;=\; (\bar \nabla_XN) \lrcorner \,  i^*\sigma
\;=\; -\,\lambda \, X \lrcorner \,i^* \sigma \  .
$$
This implies that  $i^*(N \lrcorner \, \sigma)$ is coclosed and that 
$
di^*(N \lrcorner \, \sigma) = -k\,\lambda\, i^*\sigma \ ,
$
completing also the proof of equation (i). Finally we use the calculations above to 
conclude the proof of equations (ii) and (iv):
$$
\begin{array}{rl}
\nabla_X d\gamma  & =\; -\,k\,\lambda\,\nabla_X\beta \;=\; -\,k\,\lambda^2\, X \wedge \gamma,
\\[1.5ex]
\nabla_X d^*\beta &  =\; -\,(n-k+1)\,\lambda \nabla_X\gamma
\;=\; (n-k+1)\, \lambda^2\, X \lrcorner \, \beta \ .
\end{array}
$$

Suppose, by contradiction, that $\gamma$ is a
parallel $k-1$-form on $M$. Then $d\gamma=0$, hence
$\beta=-\frac{1}{k\,\lambda}d\gamma=0$ and thus
$\gamma=-\frac{1}{(n-k+1)\,\lambda}d^*\beta=0$.
Therefore, $\sigma|_M=0$ which is not possible since $\sigma$ is a non-trivial
parallel $k$-form on $\bar M$. We conclude that $\gamma$ is not parallel.
\qed

Equations (i) and (ii) define a non-parallel special $(k-1)$-Killing form $\gamma$.
Complete manifolds admitting such forms were classified in \cite{uwe}. It turns out
that special Killing forms  can only exist on Euclidian spheres, Sasakian- and  3-Sasakian manifolds, nearly
K\"ahler manifolds in dimension 6 or nearly parallel $G_2$-manifolds in dimension 7.

The classification is based on the fact that every special Killing
$(k-1)$-form $\psi$ defines a  
parallel $k$-form $\tilde \psi$ on the metric cone $\tilde M$,
i.e. the manifold 
$\tilde M = M \times \RM_+$ with the cone metric $\tilde g = t^2 g + dt^2$. 
Recall that the metric cone is a non-complete manifold, which contains
the complete manifold $M$ as an extrinsic hypersphere. The parallel form on
$\tilde M$ is defined as  
$
\tilde \psi = \frac{1}{k} d(t ^{k} \psi) = t^{k-1} dt \wedge \psi  \,
+  \,\frac1k \,t^{k}d\psi
$ 

It is important to note that this construction assumes a certain
normalization in equations 
(ii) and (iv), which in our case is equivalent to $\lambda^2 = 1$.
Clearly, after a constant rescaling of the
metric $\bar g$ and replacing $N$ with $-N$ if
necessary, one can even assume that $\lambda=-1$
in \eqref{eh}. 

\noindent
{\bf Remark: }{\it  After finishing our paper we were informed about the article \cite{raulot},
where a part of Lemma~\ref{restrict} is proved independently. The authors show that the restriction of a parallel form onto a
extrinsic hypersphere defines a special Killing form.
}

Returning to our situation, let $i: M \subset \bar M$ be an $n$-dimensional
extrinsic hypersphere and $\sigma$ be a non-trivial parallel
$k$-form on $\bar M$. Restricted to
the submanifold $M \subset \bar M$, we may write $\sigma$ with the notation from above as
$$
\sigma \;=\; N \wedge i^*(N \lrcorner \, \sigma) \, +\,  i^*\sigma\; = \;N
\wedge \gamma + \beta \, .
$$
where $\gamma$ is  a non-parallel special $(k-1)$-
Killing form such that $d\gamma = k \beta$. Thus we obtain that 
$$
\tilde \sigma \;=\; t^{k-1} dt \wedge \gamma \;+\; t^k \,\beta \ ,
$$
is a non-trivial parallel $k$-form on the cone $\tilde M$. Obviously, 
the $k$-forms $\sigma$ and $\tilde \sigma$ 
have the same algebraic type. This implies that
their stabilizer under the $\SO(n+1)$-action on $\Lambda^k$ has to be the same.

It is well known that if $(M,g)$ is complete, the metric cone $\tilde
M$  has reducible holonomy only if it is flat, in which case $M$ is
isometric to the standard sphere \cite{gallot}. Moreover, if the cone metric $\tilde g$ is Einstein,
then it has to be Ricci flat. In particular, the metric cone can be symmetric only if it is flat. 
Indeed an irreducible symmetric space is Einstein, thus the cone is then Ricci flat and also flat.
Similarly, the metric cone can not be a quaternion-K\"ahler manifold, since these 
manifolds are automatically Einstein. Thus the scalar curvature of the cone vanishes and the
holonomy is reduced to $\Sp(m)$, i.e. the cone is in fact  hyperk\"ahler.  
According to the the Berger list, there 
remain five cases of irreducible cones  $\tilde M$ admitting  parallel forms: K\"ahler, Calabi-Yau,  hyperk\"ahler
manifolds, and manifolds with holonomy ${\mathrm G}_2$ resp. $\Spin(7)$, in dimensions $7$ resp $8$.
It follows that $M$ has a Sasakian, Einstein-Sasakian, 3-Sasakian, 
nearly K\"ahler or nearly parallel ${\mathrm G}_2$ structure, respectively.

\subsection{Quaternion-K\"ahler manifolds}
Let $(\bar M^{4m}, \bar g)$ be a quaternion-K\"ahler manifold, i.e. a Riemannian manifolds with (restricted) holonomy  contained in $\Sp(m)\cdot \Sp(1)$. Since for $m=1$ the holonomy 
condition is empty, one usually assumes $m\ge 2$. On quaternion-K\"ahler  manifolds
one has a parallel $4$-form $\sigma$, the so-called Kraines form. Its stabilizer is the
group  $\Sp(m)\cdot \Sp(1) \subset \SO(4m)$. 

Let $M \subset \bar M$ be an extrinsic hypersphere. Thus it admits a
special Killing form 
and carries one of the special geometric structures mentioned
above. Let us first assume 
that $M$ is Sasakian, but not Einstein. Then the cone $\tilde M$ is an
irreducible 
K\"ahler manifold with holonomy equal to $\U(2m)$. The parallel forms
are powers of the K\"ahler form, whose stabilizers  
contain $\U(2m)$. But for $m\ge 2$ the unitary group  $\U(2m)$ is not
contained in  $\Sp(m)\cdot \Sp(1) $.  
Thus this case is not possible.

In the remaining cases, $M$ is the standard sphere, Einstein-Sasakian,
3-Sasakian, nearly K\"ahler, or nearly parallel 
${\mathrm G}_2$, and is Einstein with positive scalar
curvature $\scal_g = n(n-1)$.
If $\bar M$ would be complete then we could apply the result of Koiso,
i.e. Theorem~\ref{koiso},  
to rule out these cases. However, even if $\bar M$ is not complete, we
may exclude the 
remaining possibilities.
Indeed, the cone over $M$ has to be Ricci flat and Lemma~\ref{factor}
shows that 
$\scal_{\bar g}= 0$. Thus the holonomy of $(\bar M, \bar g)$ reduces
further to $\Sp(m)$, which is a different case. 

This  proves  Theorem~\ref{qk}.

\qed

\subsection{K\"ahler manifolds}
This case also includes Calabi-Yau and hyperk\"ahler manifolds.
It is well known that a K\"ahler form $\sigma \in \Omega^2(\bar M)$
induces a Sasakian structure on any  
extrinsic hypersphere $M \subset \bar M$. The Killing vector field of
the Sasakian structure 
is given by $\xi = JN = N \lrcorner \, \sigma$
(cf. \cite{yamaguchi}). This situation was also studied in \cite{chen2} and \cite{chen3}. Non-complete examples are 
obtained as metric cones over Sasakian, Einstein-Sasakian resp. 3-Sasakian
manifolds. 

However we do not know of any example of a complete K\"ahler manifold
admitting an extrinsic hypersphere.

\subsection{Manifolds with holonomy ${\mathrm G}_2$ or $\Spin(7)$}
Let $(\bar M, \bar g)$ be a manifold with holonomy contained in $ {\mathrm G}_2$ or $\Spin(7)$.
Then $\bar M$ carries a parallel $3$- resp. $4$-form $\sigma$ and the 
$2$- resp. $3$-form $N \lrcorner \, \sigma$ defines a nearly K\"ahler resp. nearly parallel
${\mathrm G}_2$-structure on any extrinsic hypersphere $M \subset \bar M$. These manifolds are
Einstein with positive scalar curvature and we may use the result of Koiso from Theorem~\ref{koiso}
to exclude them as hypersurfaces of complete manifold $\bar M$. 

Again there are non-complete examples $\bar M$, as metric cones over nearly K\"ahler resp.
nearly parallel ${\mathrm G}_2$-manifolds. Conversely it follows from~\cite[Eq.~(2.3.b)]{koiso} that any
$\bar M$ with holonomy ${\mathrm G}_2$ or $\Spin(7)$ admitting an extrinsic hypersphere $M$ is 
locally isometric to the cone over $M$.

More generally, it is an old and well known observation of Gray~\cite{gray} that already
the existence of a nearly parallel ${\mathrm G}_2$-structure on $\bar M^7$ implies the existence
of a nearly K\"ahler structure on any totally umbilical hypersurface $M\subset \bar M$.
However, as we have just seen, it is a striking consequence of  Koiso's Theorem~\ref{koiso}
that if $\bar M$ is complete, then no new examples of nearly K\"ahler
manifolds can be produced in this way.

\subsection{Local product manifolds}
Let $\bar g$ be a Riemannian product metric on $\bar M = \bar M_1 \times \bar M_2$. We assume
that not both factors have dimension one. The volume forms $\vol_{M_1}, \vol_{M_2}$ are
parallel forms on $\bar M$. Let $\bar M_1$ be the factor with a non-vanishing projection of the normal
vector $N$. Then $N \lrcorner \, \vol_{\bar M_1}$ is different from zero and defines, as described above, 
a parallel form of degree $\dim \bar M_1$ on the cone over $M$. However this form has a
non-trivial kernel (the vectors from $T\bar M_2 \cap TM$), which is at least one-dimensional
and defines a parallel distribution on the cone. Hence the cone is reducible and, if M is
complete, we apply the theorem of Gallot~\cite{gallot} to conclude that the cone is flat and
$M$ is isometric to the sphere. This result was first obtained  by M. Okumura~\cite{okumura} 
(using the Obata Theorem).


Finally we remark that if we do not require the completeness condition
for $M$ and $\bar M$, then the situation is much more flexible and
one can construct lots of examples by taking $(\bar M,\bar g)$ to be a
product of two Riemannian cones $(M_1\times\RM,t^2g_1+dt^2)$ and
$(M_2\times\RM,s^2g_2+ds^2)$. Indeed, such a product is always a  
Riemannian cone over the manifold $M=M_1\times M_2\times \RM$ endowed
with the incomplete Riemannian metric
$g:=\sin^2\theta\,g_1+\cos^2\theta\,g_2+d\theta ^2$, 
as shown by the formula (cf. \cite{mo})
$$(t^2g_1+dt^2)+(s^2g_2+ds^2)=r^2(\sin^2\theta\,g_1+\cos^2\theta\,g_2+d\theta
^2)+dr^2, \qquad (s,t)=(r\cos\theta,r\sin\theta).$$
The manifold $(M,g)$ is thus embedded as a totally umbilical hypersurface in $(\bar M,\bar g)$.

\subsection{Locally symmetric spaces}
Extrinsic spheres in locally symmetric spaces are well understood. It
follows from results of  
Chen~\cite{chen1} that the real space forms are the only irreducible
locally symmetric spaces admitting 
extrinsic hyperspheres. Since every locally irreducible symmetric space 
is a complete Einstein manifold, this result is also implied
by  Theorem~\ref{koiso} of Koiso. Moreover, any extrinsic hypersphere
in a symmetric space is a symmetric 
submanifold  in the sense of~\cite[Ch.~9.3]{bco}
(cf.~\cite[Proposition~9.3.1]{bco}). Therefore, if $\bar M$ is a product
$\bar M_1\times\cdots\times \bar M_k$ where $\bar M_i$ are simply connected irreducible
symmetric spaces, it follows from a result of Naitoh~\cite{naitoh} 
that any extrinsic hypersphere is of the form $M_1\times \bar M_2\times\cdots\times \bar M_k$ where $M_1$ 
is an extrinsic hypersphere in a space $\bar M_1$ of constant curvature.

Similar results are true for certain classes of homogeneous spaces. In \cite{tojo} Tojo proves
that compact normally homogeneous spaces admitting extrinsic hyperspheres have 
constant sectional curvature.  The same conclusion is proved by Tsukada in \cite{tsukada}
for isotropy irreducible homogeneous spaces admitting totally umbilical hypersurfaces.


\section{Totally geodesic hypersurfaces}
\label{sec:hypersurfaces}

\newcommand{\frakg}{\mathfrak{g}}
\newcommand{\frakp}{\mathfrak{p}}
\newcommand{\frakk}{\mathfrak{k}}
\newcommand{\rmS}{\mathrm{S}}

There are many  examples of totally geodesic  hypersurfaces in (possibly non-complete) Einstein manifolds.
In fact Koiso proves in \cite{koiso} the following

\begin{ath}\label{koiso2}
Let $(M, g)$ be a real analytic Riemannian manifold with constant scalar curvature.
Then there exists a (possibly non-complete) Einstein manifold $(\bar M, \bar g)$ such that $(M, g)$ is 
isometrically embedded into $(\bar M, \bar g)$ as a totally geodesic hypersurface.
Moreover, such $(\bar M, \bar g)$ is essentially uniquely determined. More precisely, 
if $(\tilde M, \tilde g)$ is a second Einstein manifold which contains $M$ as a totally geodesic hypersurface, 
then there exist open neighborhoods $\bar U$ and $\tilde U$ 
of $M$ in $\bar M$ and $\tilde M$, respectively, and an isometry $I:\tilde U\to \bar U$ with $I|_M=\Id$.
\end{ath}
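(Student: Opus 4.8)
The plan is to realize $(\bar M,\bar g)$ as the solution of a Cauchy problem for the Einstein equation, with the given hypersurface $M$ serving as the initial hypersurface. First I would pin down the Einstein constant: if $M$ is to sit as a totally geodesic (hence $\lambda=0$) hypersurface of an $(n+1)$-dimensional Einstein manifold, then the trace of the Gauss equation, in the form given by Lemma~\ref{factor}, forces $\scal_{\bar g}=\frac{n+1}{n-1}\scal_g$, so the only admissible Einstein constant is $\bar\lambda:=\frac{\scal_g}{n-1}$, a genuine constant precisely because $\scal_g$ is constant by hypothesis (this is also where $n\ge2$ enters). I would then seek $\bar g$ in Gaussian normal form $\bar g=g_t+dt^2$ on $\bar M:=M\times(-\varepsilon,\varepsilon)$, where $g_t$ is a real-analytic one-parameter family of metrics on $M$ with $g_0=g$; in such coordinates the slice $M=\{t=0\}$ is totally geodesic exactly when $\partial_t g_t|_{t=0}=0$.

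Next I would decompose $\overline{\Ric}=\bar\lambda\,\bar g$ along the slices via the Gauss--Codazzi--Ricci (ADM) formulas. Writing $h_{ij}=\tfrac12\partial_t g_{ij}$ for the second fundamental form of the slices, the tangent-tangent component takes the evolution form
\[
\tfrac12\partial_t^2 g_{ij}=\Ric^{g_t}_{ij}-\bar\lambda\,g_{ij}+Q_{ij}(g_t,\partial_t g_t),
\]
with $Q$ a universal expression quadratic in $\partial_t g_t$, while $\overline{\Ric}_{tt}=\bar\lambda$ and $\overline{\Ric}_{ti}=0$ are first-order constraints. The evolution equation is in Cauchy--Kovalevskaya form: the top $t$-derivative is isolated and the right-hand side is real-analytic in $g_t$, in its spatial derivatives up to order two, and in $\partial_t g_t$. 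With the analytic Cauchy data $g_0=g$ and $\partial_t g_t|_{t=0}=0$, the Cauchy--Kovalevskaya theorem yields a unique real-analytic family $g_t$ near $t=0$ solving the tangent-tangent equation.

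It remains to verify that the resulting metric is genuinely Einstein, i.e.\ that the two constraints hold for all $t$; this is the main point. At $t=0$ the momentum (Codazzi) constraint $\overline{\Ric}_{ti}=0$ is automatic since $h=0$. For the Hamiltonian (Gauss) constraint one computes, using $h|_{t=0}=0$ and the evolution equation, that $\overline{\Ric}_{tt}|_{t=0}=n\bar\lambda-\scal_g$, and this equals $\bar\lambda$ precisely because $\bar\lambda=\frac{\scal_g}{n-1}$; so both constraints hold at $t=0$, the earlier choice of constant being exactly what is required. To propagate them I would set $E:=\overline{\Ric}-\bar\lambda\,\bar g$, which vanishes in the tangent-tangent directions by construction, and invoke the contracted second Bianchi identity $\bar\nabla^\mu E_{\mu\nu}=\tfrac12\partial_\nu(\tr E)$. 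Since $E_{ij}\equiv0$ one has $\tr E=E_{tt}$, and the identity becomes a first-order linear homogeneous system expressing $\partial_t E_{tt}$ and $\partial_t E_{ti}$ in terms of $E_{tt},E_{ti}$ and their spatial derivatives; as $E_{tt}|_{t=0}=E_{ti}|_{t=0}=0$, Cauchy--Kovalevskaya uniqueness forces $E\equiv0$, so $\bar g$ is Einstein and contains $M$ as a totally geodesic hypersurface.

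Finally, for the essential uniqueness, suppose $(\tilde M,\tilde g)$ is a second Einstein manifold containing $M$ as a totally geodesic hypersurface. By the theorem of DeTurck and Kazdan its metric is real-analytic in normal coordinates, so the Gaussian normal coordinate system based on $M$ is analytic and presents a neighborhood of $M$ in $\tilde M$ as $M\times(-\varepsilon',\varepsilon')$ with $\tilde g=\tilde g_t+dt^2$, $\tilde g_0=g$, $\partial_t\tilde g_t|_{t=0}=0$. Its Einstein constant is again forced to equal $\frac{\scal_g}{n-1}=\bar\lambda$ by Lemma~\ref{factor}, so $\tilde g_t$ solves exactly the same analytic Cauchy problem as $g_t$; by the uniqueness part of Cauchy--Kovalevskaya, $\tilde g_t=g_t$ for small $t$. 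Composing the two Gaussian normal parametrizations then produces the desired isometry $I:\tilde U\to\bar U$ between neighborhoods of $M$, fixing $M$ pointwise. The one delicate ingredient throughout is the constraint propagation: everything else is the standard well-posed analytic Cauchy problem, but one must check that solving only the tangent-tangent equation, together with the algebraically forced value of $\bar\lambda$, already delivers a full Einstein metric.
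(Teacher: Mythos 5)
The paper contains no proof of this statement: Theorem~\ref{koiso2} is quoted from Koiso~\cite{koiso}, so there is no in-paper argument to compare against. Your proposal is in substance a reconstruction of Koiso's original proof -- Gaussian normal form $\bar g=g_t+dt^2$, the tangential Einstein equation solved as an analytic Cauchy problem with data $(g,0)$ via Cauchy--Kovalevskaya, the Einstein constant $\scal_g/(n-1)$ forced by the trace of the Gauss equation as in Lemma~\ref{factor}, and propagation of the remaining components of $E=\overline{\Ric}-\bar\lambda\bar g$ by the contracted second Bianchi identity -- and the outline is correct. Three small points deserve to be made explicit. First, $\overline{\Ric}_{tt}$ is not literally a first-order constraint, since it contains $\partial_t^2g_t$ through $\partial_t\tr h$; what is first order is the Gauss (Hamiltonian) constraint, equivalently $E_{tt}$ \emph{after} the evolution equation has been used to eliminate $\partial_t^2g_t$. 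This is what you implicitly do when computing $\overline{\Ric}_{tt}|_{t=0}=n\bar\lambda-\scal_g$, but it must be said, as otherwise the ``first-order linear homogeneous system'' for $(E_{tt},E_{ti})$ is not visibly of Cauchy--Kovalevskaya type. Second, in the uniqueness step you need the Fermi coordinates based on $M$ inside $\tilde M$ to be real analytic; this holds because a totally geodesic submanifold of an analytic Riemannian manifold is itself analytic (locally the image under $\exp$ of a linear subspace) and the normal exponential map is analytic, but it is a step to be justified rather than assumed. Third, Cauchy--Kovalevskaya is local in the spatial variables too, so one solves on coordinate patches and glues by uniqueness, obtaining the Einstein metric only on a neighborhood of $M\times\{0\}$ -- which is all the (possibly non-complete) statement requires. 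With these glosses, and for $n\ge 2$, your argument proves the theorem.
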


In this section we will show that the Einstein manifold $\bar M^{n+1}$
given by Koiso's Theorem~\ref{koiso2} can never have special holonomy if
$(M,g)$ is locally irreducible. In fact, 
then $\bar M$ is locally irreducible, too, because of Theorem~\ref{ath:pr}. Thus we can apply Theorem~\ref{ath:hol} 
in order to obtain that the restricted holonomy group of $\bar M$ is given by $\SO(n+1)$.

\subsection{Local Products}
We will first show that if $(\bar M,\bar g)$ is locally reducible and complete, 
then the problem of finding totally geodesic hypersurfaces in $\bar M$ reduces to the same problem on one of the factors. 
More precisely, we will prove the following:

 \begin{ath}\label{ath:pr}
 Let $(\bar M,\bar g)$ be a complete, simply connected manifold with reducible holonomy, and assume that 
 $(M,g)$ is a complete totally geodesic hypersurface of $\bar M$. 
Then $\bar M$ can be written as a Riemannian product $(\bar M,\bar g)=(\bar M_1,\bar g_1)\times (\bar M_2,\bar g_2)$ 
such that $M$ is equal to $M'_1\times \bar M_2$, where $M'_1$ is a complete totally geodesic hypersurface of $\bar M_1$.
 \end{ath}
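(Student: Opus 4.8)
The plan is to combine the de Rham decomposition of $\bar M$ with the fact that the unit normal of a totally geodesic hypersurface is parallel. Since $(\bar M,\bar g)$ is complete, simply connected and has reducible holonomy, the de Rham theorem splits it as a Riemannian product $\bar M=\RM^{k}\times \bar M_1\times\cdots\times \bar M_r$, where $\RM^{k}$ is the flat factor and each $\bar M_a$ with $a\ge 1$ is irreducible and non-flat. Write $\mathcal D_a$ for the corresponding $\bar\nabla$-parallel tangent distributions and $P_a$ for the (parallel) orthogonal projections. Because $M$ is totally geodesic, \eqref{eh} with $\lambda=0$ gives $\bar\nabla_XN=0$ for every $X$ tangent to $M$, so $N$ is parallel along $M$. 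Applying the parallel projections, each component $N_a:=P_aN$ is parallel along $M$, hence of constant norm $c_a$ on the connected hypersurface $M$, with $\sum_a c_a^2=1$. I would moreover re-split the flat factor into lines so that the whole $\RM^{k}$-component of $N$ lies in a single line.

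The heart of the argument is a dichotomy according to whether some factor carries a vanishing component of $N$. Suppose first that at least one factor has $c_a=0$. Grouping all factors with $c_a\neq0$ into $\bar M'_1$ and the remaining (non-empty) ones into $\bar M'_2$ yields a Riemannian product $\bar M=\bar M'_1\times\bar M'_2$ with $N\in T\bar M'_1$ along $M$, so that the parallel distribution $T\bar M'_2$ is everywhere tangent to $M$. The leaves $\{y\}\times \bar M'_2$ are totally geodesic in $\bar M$; starting at a point of $M$, all geodesics of such a leaf are tangent to $M$, so by completeness and total geodesy of $M$ the whole leaf lies in $M$. Hence $M$ is a union of full $\bar M'_2$-leaves, $M=M'_1\times\bar M'_2$, where $M'_1$ is the image of $M$ under the projection onto $\bar M'_1$. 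A dimension count shows $\dim M'_1=\dim\bar M'_1-1$; the submanifold $M'_1$ is totally geodesic because $\II$ vanishes and the metric is a product, and it is complete since $M$ and $\bar M'_2$ are. This is exactly the asserted conclusion.

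The main obstacle is to exclude the remaining case, in which $N$ has a non-zero component in \emph{every} de Rham factor; here I expect to argue by contradiction using irreducibility. Fix a non-flat factor $\bar M_i$ (one exists, for otherwise $\bar M$ is flat and the re-splitting of $\RM^{k}$ already produces a vanishing component). Since some other component $N_j$ is non-zero, the projection $P_i$ maps $T_xM=N^{\perp}$ onto all of $\mathcal D_i$: the subspace $\mathcal D_i\cap TM$ gives $N_i^{\perp}$ inside $\mathcal D_i$, while the tangent vector $c_j^2N_i-c_i^2N_j\in N^{\perp}$ supplies the missing $N_i$-direction. Thus $\pi_i|_M\colon M\to \bar M_i$ is a submersion onto a connected open set $U_i$. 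The unit field $\hat N_i:=N_i/c_i$ is parallel along $M$ and, because the product connection transports $\mathcal D_i$-vectors trivially along curves that are constant in $\bar M_i$, it is constant along the fibres of $\pi_i|_M$; hence it descends to a well-defined parallel unit vector field on $U_i$. A non-flat irreducible manifold admits no parallel vector field on any open set, which gives the desired contradiction and leaves us always in the good case of the dichotomy.

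The only genuinely delicate points are in this last step. I would make the descent rigorous by writing $\pi_i|_M$ in product coordinates and checking that $\hat N_i$, being $\bar\nabla$-parallel in the fibre directions $\bigoplus_{j\neq i}\mathcal D_j\cap TM$, has constant $\bar M_i$-value along each fibre, the parallelism on $U_i$ following by projecting the parallel transport equation. The irreducibility obstruction I would record as a short lemma: differentiating $\bar R(u,v)\hat N_i=0$ repeatedly shows that every $(\bar\nabla^{\ell}\bar R)(u,v)$ annihilates $\hat N_i$ at a base point, so by Ambrose--Singer the line $\RM\hat N_i$ is holonomy-invariant, contradicting the irreducibility of $\bar M_i$. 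With the descent and this lemma established, the dichotomy is complete and Theorem~\ref{ath:pr} follows.
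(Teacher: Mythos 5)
Your strategy genuinely differs from the paper's: you pass to the full de Rham decomposition (flat factor plus irreducible non-flat factors) and try to show that $N$ must have a vanishing component, whereas the paper works with an arbitrary two-factor splitting and, when both components of $N$ are nonzero, shows that each factor itself splits off a line carrying that component. Your ``easy case'' (some component vanishes) is handled correctly, and the observation that $P_i(T_xM)=\mathcal D_i$ via the tangent vector $c_j^2N_i-c_i^2N_j$ is a nice touch. The problem is the exclusion of the remaining case, where the gaps are genuine.

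First, you only establish that $\pi_i|_M$ is a submersion onto an \emph{open} subset $U_i\subset\bar M_i$, and then invoke ``a non-flat irreducible manifold admits no parallel vector field on any open set.'' That statement is false: irreducibility of a de Rham factor concerns its global restricted holonomy, and an irreducible manifold can perfectly well contain open subsets (even flat ones) carrying parallel vector fields. You need $U_i=\bar M_i$; this is true and provable --- since $M$ is complete and totally geodesic, $\exp_x(T_xM)\subset M$, and $P_i(T_xM)=\mathcal D_i$ together with the product formula for the exponential map and Hopf--Rinow give surjectivity --- but you never prove it. Your fallback lemma does not repair this: the span of the operators $(\bar\nabla^{\ell}\bar R)(u,v)$ at a single point is the \emph{infinitesimal} holonomy algebra, which for merely smooth (non-analytic) metrics may be strictly smaller than the restricted holonomy algebra; and this is in any case not what Ambrose--Singer says, which requires curvature operators at \emph{all} points conjugated back by parallel transport. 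Second, the descent of $\hat N_i$ is not justified: your argument shows $N_i$ is constant on \emph{connected components} of fibres of $\pi_i|_M$, but fibres of a submersion need not be connected, so the global well-definedness (equivalently, the patching of the local parallel fields obtained from local sections) is left open. Both gaps are reparable --- for instance, lift broken geodesics of $\bar M_i$ to geodesics of $M$ (possible since $P_i(T_xM)=\mathcal D_i$ and $M$ is complete and totally geodesic), note that parallelism of $N$ along $M$ forces $\bar R(X,Y)N=0$ for $X,Y\in TM$ and hence $R^{\bar M_i}(u,v)\hat N_i=0$ for all $u,v\in\mathcal D_i$ at every point so reached, and then apply the actual Ambrose--Singer theorem to conclude that the restricted holonomy of $\bar M_i$ fixes a vector --- but as written the contradiction in your main case is not established.
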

 
 \begin{proof} Since $\bar M$ is complete, simply connected and has reducible holonomy, 
the de Rham decomposition theorem shows that it 
is isometric to a Riemannian product $\bar M = \bar M_1 \times \bar M_2$, with $\bar g=\bar g_1+\bar g_2$. The exponential function clearly satisfies
 \beq\label{exp}
 \exp_{(x_1,x_2)}^{\bar M}(X_1,X_2)=(\exp_{x_1}^{\bar M_1}(X_1),\exp_{x_2}^{\bar M_2}(X_2))\eeq
 for all $(x_1,x_2)\in\bar M$ and $(X_1,X_2)\in T_{(x_1,x_2)}\bar M$.
 
 Let $M \subset \bar M$ be a totally geodesic hypersurface with unit length normal vector field $N$. 
 With respect to the decomposition 
$T\bar M = T \bar M_1 \oplus T \bar M_2$, the vector field $N$ can be written as $N = X_1 + X_2$ at every point of $M$. If at some point
$x =(x_1,x_2)\in M$ one component, e.g. $X_1$, vanishes, then $T_xM =T_{x_1}
\bar M_1\times X_2^\perp$, 
thus by \eqref{exp} $M=\bar M_1\times
\exp_{x_2}(X_2^\perp)$, where the second factor is clearly a totally geodesic
hypersurface in $\bar M_2$. In the following we will assume that both components of $N$ are different from zero.

For every $x =(x_1,x_2)\in M$ we write $N_x=aN_1+bN_2$ where $a$ and $b$ are functions on $M$ and $N_1$, $N_2$ 
are unit vectors in $T_{x_1}\bar M_1$ and $T_{x_2}\bar M_2$ depending a priori on $x_2$ and $x_1$ respectively. 
We will show later on that they actually do not depend on these variables.

Let $\omega_i$ denote the restriction to $M$ of the volume forms of the two factors of $\bar M$. 
Consider the vector field $H$ on $M$ defined by
$g(H,\cdot)=*(*\omega_1\wedge*\omega_2)$ (the Hodge dual $*$ is that of
$M$). Up to a sign, depending on the orientation of $M$, one has
$H=bN_1-aN_2$. Since $\omega_i$ are parallel, $H$ 
is a parallel vector field on $M$, so if $\f_t$ denotes its flow, then $f_t(x)$ is a geodesic for all $x\in M$. 

Let us fix some $x =(x_1,x_2)\in M$ and consider 
the totally geodesic surfaces $M_1=(\bar M_1\times \{x_2\})\cap M$ and
$M_2=(\{x_1\}\times \bar M_2)\cap M$ of $\bar M_1$ and $\bar M_2$
respectively. The projection of $T_xM$ to $T_{x_1}\bar M_1$ is onto, therefore the projection $\pi_1:M\to \bar M_1$ is onto. Indeed, 
for every $y_1\in \bar M_1$ there exists $Y_1\in T_{x_1}\bar M_1$ such that $y_1=\exp_{x_1}^{\bar M_1}(Y_1)$, 
so by \eqref{exp}, $y_1=\pi_1(\exp_{(x_1,x_2)}^{\bar M}(Y_1,Y_2))$, where $Y_2$ is chosen so that $(Y_1,Y_2)\in T_xM$.

We will now show that $aN_1$ only depends on $x_1$. Indeed, the set of $y_2\in\bar M_2$ such that 
$(x_1,y_2)\in M$ is just $M_2$, and for every vector $Y_2\in T_{x_2}M_2$ we have $\bar g(Y_2,N)=0$, so 
$0=\nabla^{\bar g}_{Y_2}N=\nabla^{\bar g}_{Y_2}(aN_1)+\nabla^{\bar
  g}_{Y_2}(bN_2)$. Since the two terms 
in the right hand factor are tangent to $\bar M_1$ and $\bar M_2$ respectively, they both vanish. 
In particular, $\nabla^{\bar g}_{Y_2}(aN_1)=0$, and since $N_1$ has unit length, $a$ and $N_1$ are both constant along $M_2$. 
This fact, together with the previous observation that the projections of $M$ on $\bar M_1$ and $\bar M_2$ are onto, 
show that there exist globally defined functions $a$ on $\bar M_1$, $b$ on
$\bar M_2$ and vector fields $\bar N_1$ on $\bar M_1$, $\bar N_2$ on $\bar
M_2$, such that $N_x=a(x_1)\bar N_1(x_1)+b(x_2)\bar N_2(x_2)$ for all $x=(x_1,x_2)\in M$.

We claim that $\bar N_1$ is parallel on $\bar M_1$. First, if $X\in
T_{x_1}M_1$, 
then $X$ is orthogonal to $N$ so $\nabla_X(a\bar N_1)=0$ like before. 
Since $\bar N_1$ has unit length, this shows that $X(a)=0$, so $a$ is constant
along $M_1$ and $\nabla_X\bar N_1=0$. 
It remains to check the parallelism in the direction of $\bar N_1$ itself. 
Equation \eqref{exp} shows that the geodesic $\exp_{x_1}(tb\bar N_1)$ is the
projection in $\bar M_1$ of $\exp_x(tH)$, whose tangent vector at every $t$ is
$H$. Thus the tangent vector of $\exp_{x_1}(tbN_1)$ is
$(bN_1)(\exp_{x_1}(tbN_1))$, showing that $bN_1$ is parallel in the direction of $N_1$. On the other hand, 
we have already seen that $b$ only depends on the second variable, so $N_1$ is parallel at $x_1$, and thus everywhere on $\bar M_1$. 
Similarly, $N_2$ is parallel on $\bar M_2$. By the de Rham theorem again, one can write $\bar M_1=M_1\times \RM$, $\bar g_1=g_1+dt^2$, 
$N_1=\partial/\partial t$ and $\bar M_2=M_2\times \RM$, $\bar g_2=g_2+ds^2$, $N_2=\partial/\partial s$. From the above, the functions $a$ and $b$ only depend on $t$ and $s$ 
respectively, but since $a^2+b^2=1$, they are both constant. This shows that identifying $\bar M$ with $\RM\times (M_1\times M_2\times
\RM)$ 
by the isometry $((t,x_1),(s,x_2))\mapsto (at+bs,(x_1,x_2,bt-as))$, $N$ is identified to the unit tangent vector to the $\RM$-factor, 
and thus $M$ is isometric to the second factor $M_1\times M_2\times \RM$. This finishes the proof of the theorem.
\end{proof}

\subsection{Irreducible manifolds with totally geodesic hypersurfaces}
\label{sec:irreducible_hypersurfaces}

Now we turn our attention to the case where $(\bar M,\bar g)$ is
locally irreducible. 
Recall that a hypersurface $M$ of a Riemannian manifold $(\bar M,\bar
g)$ is called {\it locally reflective} 
if the geodesic reflection $r$ in $M$ defines an isometry of a
suitable open neighborhood $U$ of $M$ in $\bar M$. 
Then $r$ is locally given by $r(\exp(t N_p))=\exp(-t N_p)$ (where
$N_p$ denotes the 
normal vector at $p \in M$). Moreover, we recall that a locally
reflective submanifold is  
automatically totally geodesic (cf. \cite{bco}).

If $M$ is a totally geodesic hypersurface of an Einstein manifold
$(\bar M,\bar g)$, then $M$ has constant scalar curvature 
according to Lemma~\ref{factor}.
In this situation, N. Koiso has shown that $M$ is a locally reflective
submanifold, cf. Remark~7 of~\cite{koiso}. 

\begin{ath}\label{ath:hol}
Let $(\bar M,\bar g)$ be a locally irreducible Riemannian manifold. 
If there exists an $n$-dimensional locally reflective hypersurface  $M
\subset \bar M$,  
then the restricted holonomy group of $\bar M$ is equal to $\SO(n+1)$. In particular, there are no 
totally geodesic hypersurfaces in locally irreducible Einstein manifolds with special holonomy. 
\end{ath}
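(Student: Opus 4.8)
The plan is to extract, from the geodesic reflection $r$, an orthogonal involution of the tangent space at a point of $M$ that constrains the holonomy. Fix $p\in M$ and set $s:=dr_p$. Since $r$ fixes $M$ pointwise and sends $\exp(tN_p)$ to $\exp(-tN_p)$, the map $s$ is the identity on $T_pM$ and equals $-\id$ on the normal line $\RM N_p$; that is, $s$ is the orthogonal reflection in the hyperplane $T_pM\subset T_p\bar M$. Because $s$ is the differential at a fixed point of the local isometry $r$ of a neighbourhood $U$ of $p$, it normalises the restricted holonomy group $G:=\Hol^0_p(U)$: the reflection $r$ carries null-homotopic loops in $U$ to null-homotopic loops in $U$ and intertwines their parallel transports with conjugation by $s$. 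Hence the holonomy algebra $\gg:=\mathrm{Lie}(G)\subseteq\so(T_p\bar M)$ is invariant under $\Ad(s)$.

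I would then split $\gg$ into the eigenspaces of $\Ad(s)$. With respect to the decomposition $\so(T_p\bar M)=\so(T_pM)\oplus\big(T_pM\wedge N_p\big)$, the $+1$-eigenspace of $\Ad(s)$ is $\so(T_pM)$ and the $-1$-eigenspace is the space of mixed endomorphisms $\{v\wedge N_p : v\in T_pM\}$. Thus $\gg=\hh\oplus\mm$ with $\hh:=\gg\cap\so(T_pM)$ and $\mm=\{v\wedge N_p : v\in V\}$ for a well-defined subspace $V\subseteq T_pM$. The point is that $\so(T_pM)\oplus(T_pM\wedge N_p)$ is exactly the symmetric-pair decomposition of $(\so(n+1),\so(n))$, so it obeys the bracket relations $[B,v\wedge N_p]=Bv\wedge N_p$ and $[v\wedge N_p,w\wedge N_p]=-\,v\wedge w$.

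From here local irreducibility finishes the argument, since it guarantees that $G$ acts irreducibly on $T_p\bar M$. First, $[\hh,\mm]\subseteq\gg\cap(T_pM\wedge N_p)=\mm$ forces $\hh V\subseteq V$. One then checks directly that $W:=\RM N_p\oplus V$ is preserved by $\hh$ (which annihilates $N_p$ and preserves $V$) and by $\mm$ (which sends $N_p$ into $V$ and $V$ into $\RM N_p$), so $W$ is a nonzero $\gg$-invariant subspace. Irreducibility gives $W=T_p\bar M$, hence $V=T_pM$ and $\mm=T_pM\wedge N_p$. Now $[\mm,\mm]=\Lambda^2 T_pM=\so(T_pM)$ (this is where $n\ge 2$ enters), so $\hh=\so(T_pM)$ and therefore $\gg=\so(T_pM)\oplus(T_pM\wedge N_p)=\so(T_p\bar M)$. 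Since $G$ is connected, $\Hol^0_p(\bar M)\supseteq G=\SO(n+1)$, whence equality. The final assertion follows at once: a totally geodesic hypersurface in a locally irreducible Einstein manifold is locally reflective by Lemma~\ref{factor} together with Koiso's observation, so its restricted holonomy is all of $\SO(n+1)$, which is incompatible with special holonomy.

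The step that needs the most care is not the algebra but the normalisation claim in the first paragraph: one must make sure that the group on which $s$ acts is the same group whose irreducibility is supplied by the hypothesis. This is precisely why I would work throughout with the local holonomy $\Hol^0_p(U)$ rather than attempt to extend $r$ globally — local irreducibility of $\bar M$ means exactly that the local holonomy acts irreducibly at every point, so $G$ is automatically irreducible and no real-analyticity or global extension of the reflection is required. Once this is set up correctly, the remainder is the purely algebraic eigenspace-and-bracket computation sketched above.
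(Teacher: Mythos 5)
Your argument is correct and follows the paper's proof in its essentials: the geodesic reflection induces an involution $\tau=\Ad(d_pr)$ preserving the holonomy group, the holonomy algebra splits into $\tau$-eigenspaces $\hh\oplus\mm$ with $\hh\subset\so(T_pM)$ and $\mm\subset T_pM\wedge N_p$, irreducibility forces $\mm=T_pM\wedge N_p$, and then $[\mm,\mm]\oplus\mm=\so(T_p\bar M)$. The one place you genuinely diverge is the middle step: the paper shows $\dim\mm=n$ by exhibiting $[g]\mapsto gN_p$ as a totally geodesic map of the symmetric space $G/\tilde H$ into $\SM^n$, so that the orbit $GN_p$ is a great subsphere whose linear span is $G$-invariant; you instead verify directly that $\RM N_p\oplus V$ (which is precisely that linear span) is a $\gg$-invariant subspace via the bracket relations. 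Your version is more elementary and avoids the symmetric-space machinery while proving the same dimension count. One caveat, which the paper shares and papers over with its ``it suffices to prove the theorem in case $U=\bar M$'': the conjugation property only holds for loops contained in the neighbourhood $U$ on which $r$ is defined, so the group your involution acts on is $\Hol^0_p(U)$, and local irreducibility of $\bar M$ (i.e.\ irreducibility of $\Hol^0_p(\bar M)$) does not formally guarantee that $\Hol^0_p(U)$ acts irreducibly for a smooth, non-analytic metric; your closing remark asserts this equivalence without justification. Since you flag the issue explicitly and it is no worse than in the original, I would not count it as a gap in your proposal, but it deserves a sentence of justification (e.g.\ restricting to the Einstein case, where the metric is real analytic and local and restricted holonomy agree).
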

\proof

Let $U$ be an open neighborhood of $M$ in $\bar M$ in which the geodesic reflection $ r $ in $M$ is defined. 
Clearly, it suffices to prove the theorem in case $U=\bar M$. Since $r(p)=p$ for all points $p \in M$, 
we obtain an involutive Lie group homomorphism 
$\tau:\SO(T_p\bar M)\to \SO(T_p\bar M)$  which is given by $\tau(g)=d_p r\circ g\circ d_p r$. 
Since $d_p r$ is the linear reflection in $T_pM$ and the normal vector $N_p$ 
spans the whole normal space at $p$, the connected component 
of the fixed point group under $\tau$ is equal to $\SO(T_pM)$.  
Further, let $G$ denote the restricted holonomy group of $\bar M$ 
at the point $p \in M$. Then for every closed, null-homotopic curve $\alpha:[0,1]\to \bar M$ 
the curve $r\circ\alpha$ is again closed and null-homotopic. If $g$ denotes the parallel displacement along $\alpha$, 
then the parallel displacement along $r\circ\alpha$ is given by $\tau(g)$ (because $r$ is an isometry of $\bar M$).
We obtain that $\tau(g)\in G$ for all $g\in G$. 
Let $H$ denote the subgroup of $G$ which is fixed under $\tau$ and $H_0$ be its
connected component. Set $\tilde H:= H\cap\SO(T_pM)$, then $H_0\subset\tilde H\subset H$, hence $(G,\tilde H)$ is a Riemannian symmetric pair in the
sense of \cite[Ch.~IV, \S 3]{helgason}.
 In particular, any $G$-invariant metric makes  $G/\tilde H$  a Riemannian symmetric space. Moreover, there is a natural injective map $\iota:G/\tilde H\to \rmS^{n}$ 
which is given by $[g]\mapsto g(N_p)$, where $\rmS^n$  is considered as  the Euclidian sphere of $T_p\bar M$. 
We claim that $\iota$ is a totally geodesic map, i.e. $\iota$ maps geodesics of $G/\tilde H$ into geodesics of 
$S^n$:

Let $\frakp:=\{x\wedge N_p \, |\,  x\in T_pM\}$ be the Cartan complement of 
$\frakk:=\so(T_pM)$ in $\so(T_p\bar M)$.
Then $\so(T_p\bar M)=\frakk\oplus\frakp$ with  $d_p\tau(A)=A$ 
for all $A\in\frakk$ and $d_p\tau(A)=-A$ for all $A\in \frakp$. 
Let $\frakg$  denote the Lie algebra of $G$, then the Cartan decomposition of $\frakg$ is given by 
$(\frakk\cap\frakg)\oplus(\frakp \cap \frakg)$. Let $\gamma$ be a geodesic
of $G/\tilde H$ through the origin $\tilde H$. Then there exists some $A\in\frakp\cap\frakg$ such that 
$\gamma(t)=[\exp(t A)]$ (cf.~\cite{bco}). Therefore, $\iota(\gamma(t))=\exp(t A)N_p$, which is a geodesic line of $\rmS^{n}$.

This shows that the dimension $k$ of the symmetric space $G/\tilde H$ is less or equal $n$ 
and the orbit $GN_p$ is a $k$-dimensional totally geodesic submanifold of $\rmS^n$, i.e. 
$GN_p$ is a standard Euclidian sphere $\rmS^k\subset\rmS^n$. 
Then the linear subspace of $T_p\bar M$ which is spanned by $GN_p$ is $G$-invariant and hence $k=n$, since $G$
acts irreducibly on $T_p\bar M$. It follows that $\dim(\frakp\cap\frakg)=\dim(G/\tilde H)=n=\dim(\frakp)$ and thus $\frakp\subset \frakg$,
therefore $\so(T_p\bar M)=[\frakp,\frakp]\oplus\frakp\subset\frakg$. We obtain 
that actually $\frakg=\so(T_p\bar M)$. Switching from Lie algebras  to Lie groups, we conclude that the 
connected component of $G$ is equal to $\SO(T_p\bar M)$. The result now follows.

\qed

This also proves Theorem~\ref{ath:tg}, since all ambient manifolds in question are Einstein with special holonomy.

\qed


\labelsep .5cm


\begin{thebibliography}{22}
{\footnotesize


%

\bibitem{bco}
\textsc{J. Berndt, S. Console, C. Olmos}.
{\sl Submanifolds and Holonomy}.
vol. 434. Chapman \& Hall, Boca Raton (2003)

 
 \bibitem{chen1}
\textsc{B. Y.  Chen},
{\sl Extrinsic spheres in compact symmetric spaces are intrinsic spheres.}
 Michigan Math. J. {\bf 24} (1977), no. 3, 265--271. 

\bibitem{chen2}
\textsc{B. Y. Chen},
{\sl Odd-dimensional extrinsic spheres in K\"ahler manifolds.}
 Rend. Mat. (6) {\bf 12} (1979), no. 2, 201--207.

\bibitem{chen3}
\textsc{B. Y. Chen},
{\sl Classification of totally umbilical submanifolds in symmetric spaces.}
 J. Austral. Math. Soc. (Series A ) {\bf 30} (1980), 129--136.

\bibitem{chen-nagano}
\textsc{B. Y. Chen, T. Nagano},
{\sl Totally geodesic submanifolds of symmetric spaces, II.}
 Duke Math. J. {\bf 45} (1978), no. 2, 405--425.

 \bibitem{gallot}
\textsc{S.  Gallot},
{\sl  Equations diff\'erentielles caract\'eristiques de la sph\`ere. }
Ann. Sci. Ecole Norm. Sup. (4) {\bf 12} (1979), no. 2, 235--267. 

%
%

\bibitem{gray}
\textsc{A. Gray}, 
{\sl Vector cross products on manifolds}.
Trans. Amer. Math. Soc. {\bf 141} (1969), 465--504. 


\bibitem{helgason}
\textsc{S. Helgason}, 
S.~Helgason:
{\sl Geometry, Lie Groups and Symmetric Spaces}.
American Mathematical Society {\bf 34} (2001).
%


\bibitem{koiso}
\textsc{N. Koiso}
{\sl Hypersurfaces of Einstein manifolds}.
Ann. Sci. Ecole Norm. Sup. (4) {\bf 14} (1981), no. 4, 433--443.

\bibitem{mo}
\textsc{A. Moroianu, L. Ornea},
{\sl Conformally Einstein products and nearly K\"ahler manifolds.}
Ann. Global Anal. Geom. {\bf 33} (2008), 11--18. 

%
%

\bibitem{JR}
\textsc{T. Jentsch, H. Reckziegel},
{\sl Submanifolds with parallel second fundamental form studied via
  the Gau{\ss} map}. 
Ann. Global Anal. Geom. {\bf 29}  (2006), 51--93.


\bibitem{naitoh}
\textsc{H.~Naitoh},
{\sl Grassmann geometries on compact symmetric spaces of general type}.
 J. Math. Soc. Japan} {\bf 50} (1998), 557--592.

\bibitem{nomizu}
\textsc{K. Nomizu, K. Yano},
{\sl  On circles and spheres in Riemannian geometry}.
Math. Ann. {\bf 210} (1974), 163--170. 

\bibitem{okumura}
\textsc{M. Okumuara},
{\sl  Totally umbilical hypersurfaces of a locally product Riemannian
  manifolds}. 
Kodai Math. Sem. Rep.,  {\bf 19} (1967),  35 - 42. 


\bibitem{raulot}
\textsc{S. Raulot, A. Savo}
{\sl A Reilly formula and eigenvalue estimates for differential forms}.
J. Geom Anal (2011), DOI 10.1007/s12220-010-9161-0 .



\bibitem{uwe}
\textsc{U. Semmelmann}
{\sl Conformal Killing forms on Riemannian manifolds}.
Math. Z. {\bf 245} (2003), no. 3, 503--527.

\bibitem{tojo}
\textsc{K. Tojo},
{\sl Extrinsic hyperspheres of naturally reductive homogeneous spaces. }
Tokyo J. Math. {\bf 20} (1997), no. 1, 35--43.

\bibitem{tsukada}
\textsc{K. Tsukada},
{\sl Totally geodesic hypersurfaces of naturally reductive homogeneous spaces.}
Osaka J. Math. {\bf 33} (1996), no. 3, 697--707. 


\bibitem{yamaguchi}
\textsc{S. Yamaguchi, H. Nemoto, N. Kawabata}
{\sl Extrinsic spheres in a K\"ahler manifold.}
Michigan Math. J. {\bf 31} (1984), no. 1, 15--19.

\end{thebibliography}
\end{document}